\newtheorem{teo}{Theorem}[section]
\newtheorem{lem}[teo]{Lemma}
\newtheorem{prop}[teo]{Proposition}
\newtheorem{cor}[teo]{Corollary}
\newtheorem{dfn}[teo]{Definition}
\newtheorem{ques}[teo]{Question}
\newtheorem{claim}[teo]{Claim}
\newtheorem*{remk}{Remark}
\DeclareMathOperator{\Cut}{Cut}
\DeclareMathOperator{\Sep}{Sep}
\declaretheoremstyle[
  spaceabove=\topsep, spacebelow=\topsep,
  headfont=\bf,  
  notefont=\mdseries, notebraces={(}{)},
  bodyfont=\rmfamily, 
  postheadspace=1em,
  qed=$\Diamond$
]{drem}
\declaretheorem[style=drem, name=Example, numberlike=teo]{exa}
\newcommand{\ie}[0]{\emph{i.e.} }
\newcommand{\ce}{\vcentcolon =}
\DeclareSymbolFont{mbbold}{U}{bbold}{m}{n}
\DeclareSymbolFontAlphabet{\mathbbold}{mbbold}
\newcommand{\inj}[0]{\hookrightarrow}
\newcommand{\surj}[0]{\twoheadrightarrow}
\newcommand{\zz}[0]{\ensuremath{\mathbb{Z}}}
\newcommand{\Z}[0]{\ensuremath{\mathbb{Z}}}
\newcommand{\A}[0]{\mathbb{A}}
\newcommand{\C}[0]{\mathbb{C}}
\newcommand{\F}[0]{\mathbb{F}}
\newcommand{\Q}[0]{\mathbb{Q}}
\newcommand{\R}[0]{\mathbb{R}}
\DeclareMathOperator{\cha}{char}
\newcommand{\bsl}\backslash
\title{Regular maps from the lamplighter to metabelian groups}
\author{Antoine Gournay and Corentin Le Coz\thanks{Supported by the Israel Science Foundation (grant no. 2919/19), and part of the project EOS-40007542 ``High-dimensional expanders and Kac–Moody–Steinberg groups''}}
\affil{Institut für Geometrie, TU Dresden, Germany, antoine.gournay@gmail.com\\
Department of Mathematics: Algebra and Geometry (WE01), Ghent Univesity, corentinlecoz@outlook.com}
\begin{document}

\maketitle
\abstract{We prove that the lamplighter group admits an injective Lipschitz map to any finitely generated metabelian group which is not virtually nilpotent.
This implies that finitely generated metabelian groups satisfy the ``analytically thin/analytically thick'' dichotomy recently introduced by Hume, Mackay and Tessera.
}

\section{Introduction}\label{s-intro}

The study of regular maps, namely Lipschitz maps satisfying that the preimage of every point is bounded (see \S\ref{sec: reg maps} for more details), dates back at least to David and Semmes \cite{davidsemmes}.
Benjamini, Schramm and Tim\'ar introduced in \cite{BST} a corresponding invariant called \emph{separation profile}, in the spirit of the celebrated theorem of Lipton and Tarjan \cite{liptontarjan1} and further work from Miller, Teng, Thurston, and Vavasis \cite{MTT1}.

\begin{dfn}[Benjamini, Schramm, Tim\'ar \cite{BST}]
	Let $G$ be a graph with bounded degree.
	We define the separation profile of $G$ to be the function $\mathbb N \to \mathbb N$ given by:
	\[\Sep_G(v) = \sup_{F \subset VG, |F| \le v} \Cut F,\]
	where $\Cut F$ is the minimal size of subsets $C$ of $F$ satisfying that the connected components of $F\setminus C$ contain at most $|F|/2$ vertices.
\end{dfn}

In this definition, and throughout this paper, we identify subsets of vertices to the corresponding induced subgraph.
As usual, we will endow separation profiles with the partial order given by $f \preceq g$ if and only if the exists $C>0$ such that $f(v) \preceq C g(Cv)$ for every $v \ge C$, and denote by $\asymp$ the corresponding equivalence relation.
The main interest of separation profiles in geometric group theory is their monotonicity under regular maps:

\begin{prop}[Benjamini, Schramm, Tim\'ar \cite{BST}]
Let $G, H$ be graphs with bounded degree such that there exists a regular map $H \to G$.
Then, $\Sep_H \preceq \Sep_G$.
\end{prop}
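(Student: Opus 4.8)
The plan is to transport the separation problem from $H$ to $G$ along the regular map, solve it in $G$ using $\Sep_G$, and pull the solution back. Write $f \colon H \to G$ for the given regular map, with all fibers of size at most $k$ and with $L$ chosen so that adjacent vertices of $H$ have images at $G$-distance at most $L$, and let $\Delta$ bound the degrees of both graphs. Fix a finite $F \subseteq VH$ with $|F| \le v$; it suffices to produce $C \subseteq F$ with $|C| \le K\,\Sep_G(Kv)$ for a constant $K = K(L,k,\Delta)$ and such that every connected component of the induced subgraph $F \setminus C$ has at most $|F|/2$ vertices. Taking the supremum over all such $F$ then yields $\Sep_H(v) \le K\,\Sep_G(Kv)$, i.e. $\Sep_H \preceq \Sep_G$.

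The main obstacle is that $f$ is only Lipschitz, not adjacency-preserving: if $u \sim u'$ in $H$ then $f(u),f(u')$ lie within distance $L$ in $G$ but need not be adjacent, so $f$ does not send connected subsets of $F$ to connected induced subgraphs of $f(F)$. I would resolve this by completing the image along geodesics. For every edge $\{u,u'\}$ of $H$ with both endpoints in $F$, choose a $G$-geodesic from $f(u)$ to $f(u')$ (of length at most $L$), and let $F^\ast \subseteq VG$ be the union of $f(F)$ with all vertices lying on these geodesics. Since each vertex of $F$ has at most $\Delta$ incident edges and each chosen geodesic contributes at most $L$ vertices, $|F^\ast| \le K_0 |F|$ for a constant $K_0 = K_0(L,\Delta)$.

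With this completion in hand, a separator of $F^\ast$ in $G$ can be pulled back effectively. Using $\Sep_G$ iteratively I would cut the induced subgraph $G[F^\ast]$ into connected pieces of size at most $|F|/(2k)$: one balanced cut costs at most $\Sep_G(|F^\ast|) \le \Sep_G(K_0 v)$ and halves the piece sizes, and since $|F^\ast| \le K_0|F|$ only $O(\log(K_0 k))$ rounds are needed, so the resulting separator $C^\ast \subseteq F^\ast$ satisfies $|C^\ast| \le K_1 \Sep_G(K_0 v)$ with $K_1 = K_1(L,k,\Delta)$. I then set
$$ C \ce \{\, u \in F : f(u) \in C^\ast \,\} \cup \{\, u \in F : \text{some chosen geodesic incident to } u \text{ meets } C^\ast \,\}. $$
Each $w \in C^\ast$ accounts for at most $k$ vertices of the first kind and at most $k\,|B_G(w,L)| \le k\Delta^L$ of the second kind, since those $u$ satisfy $f(u) \in B_G(w,L)$; hence $|C| \le k(1+\Delta^L)\,|C^\ast| \le K\,\Sep_G(K_0 v)$, which has the required form.

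It remains to check the component bound. Let $W$ be a connected component of $F \setminus C$. For any edge $u \sim u'$ inside $W$, both endpoints lie in $F \setminus C$, so the chosen geodesic for $\{u,u'\}$ lies in $F^\ast$ and avoids $C^\ast$; thus $f(u)$ and $f(u')$ lie in the same component of $G[F^\ast \setminus C^\ast]$. Chaining along a path in $W$ shows that $f(W)$, which is contained in $F^\ast \setminus C^\ast$ because no vertex of $W$ maps into $C^\ast$, lies inside a single such component, of size at most $|F|/(2k)$. Since the fibers have size at most $k$, we get $|W| \le k\,|f(W)| \le |F|/2$, completing the argument. The delicate point throughout is the second term in the definition of $C$: without removing the boundary vertices whose incident geodesics graze $C^\ast$, the Lipschitz distortion would let a single $H$-component straddle several $G$-pieces, and it is exactly the bounded fibers together with the bounded balls $B_G(w,L)$ that keep this correction term of size $O(|C^\ast|)$.
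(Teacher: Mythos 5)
Your proof is correct. Note that the paper itself gives no proof of this proposition --- it is stated as a citation to Benjamini--Schramm--Tim\'ar --- and your argument is a complete, self-contained rendition of the standard one: your geodesic completion $F^\ast$ plays the role of passing to a bounded power of $G$ so that the Lipschitz map becomes edge-preserving, the iterated halving overcomes the fiber multiplicity $k$, and the pullback of $C^\ast$ via bounded fibers and the balls $B_G(w,L)$ is exactly the right correction. The only compressed step is the bound $|C^\ast| \le K_1 \Sep_G(K_0 v)$: to justify it you should note that in each of the $O(\log(kK_0))$ rounds the pieces still needing a cut are disjoint and each has size greater than $|F|/(2k)$, so there are at most $2kK_0$ of them per round, each costing at most $\Sep_G(K_0 v)$ by monotonicity of $\Sep_G$; with that line added the argument is airtight.
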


This property enables separation profiles to give obstruction to the existence of regular maps between graphs of bounded degree or finitely generated groups.
Regular maps arise naturally in geometric group theory: quasi-isometric embeddings, subgroup inclusions of finitely generated groups and more generally coarse embeddings are examples of such maps.

Very few such invariants are known, and it is still a very active field of research.
Separation profiles were generalized by Hume, Mackay and Tessera \cite{HMT} into a spectrum of invariants $\Lambda^p$, called Poincar\'e profiles, where the $\Lambda^1$ profile is equivalent to the separation profile.
We refer to the thesis of the second author \cite{lecoz2020thesis} for more details on this subject.

The authors of the present paper proved in \cite{CG} that every finitely generated solvable group $G$ satisfying $\Sep_G(v) \preceq v^{1-\epsilon}$, for some positive $\epsilon$, must be virtually nilpotent.
A remarkable application of this result was obtained by Tessera \cite{TesHyp}, who proved that every amenable group having a regular map into a cocompact lattice in $\mathbb H^n \times \mathbb R^d$ must be virtually nilpotent.

Some explicit groups are known to have a separation profile equivalent to $\frac v{\log v}$, a natural function dominating every function of the form $v^{1-\epsilon}$:
direct products of non abelian free groups \cite{BST},
Baumslag-Solitar groups $\mathrm{BS}(m,n)$ when $|m| \neq |n|$ \cite[Theorem 1.16]{HMTLie},
the split oscillator group $\mathrm{Osc} = \mathrm{Heis}_3 \rtimes_{(1,-1,0)} \mathbb R$ \cite[Theorem 4.6]{HMTLie},
the lamplighter group $\mathbb Z_2 \wr \mathbb Z$, \cite[Theorem 4.3]{HMTLie} (see also Proposition \ref{propseplamp} of the present paper).
It is then natural to ask in which groups one can embed the examples above; in a previous paper \cite[Question 7.4]{CG}, the authors asked whether $\mathbb Z_2 \wr \mathbb Z$ coarsely embeds into any exponential growth solvable group.

Such constructive results were obtained by Hume, Mackay and Tessera in \cite{HMTLie}: Theorem 1.11, giving that $\mathbb Z_2 \wr \mathbb Z$ or $\mathrm{Osc}$ quasi-isometrically embeds into every finitely generated polycyclic group having exponential growth, or Theorem 3.1 stating that $\mathbb Z_2 \wr \mathbb Z$ quasi-isometrically embeds into $\mathrm{BS}(m,n)$ if $|m| \neq |n|$, and in $\mathrm{SOL}_a$ for every $a>0$.

Here, we will focus on the lamplighter group $\mathbb Z_2 \wr \mathbb Z$ and show how it can be embedded into metabelian groups.
The main result of this paper is:
\begin{teo}
\label{teomet}
	Let $G$ be a finitely generated metabelian group having exponential growth.
	Then, there exists an injective regular map $\mathbb Z_2 \wr \mathbb Z \to G$.
\end{teo}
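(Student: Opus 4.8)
The plan is to realise the lamplighter directly inside $G$ by exploiting the module structure of its derived subgroup together with a valuation that ``sees'' the exponential growth. Write $A=[G,G]$ and $Q=G/A=G^{\mathrm{ab}}$; since $G$ is finitely generated metabelian, $A$ is abelian and, by P.\ Hall's theorem, finitely generated as a module over the group ring $R=\mathbb Z[Q]$. As $G$ is not virtually abelian, $A\neq 0$, so I may fix a prime filtration $0=A_0\subsetneq A_1\subsetneq\cdots\subsetneq A_n=A$ with $A_i/A_{i-1}\cong R/\mathfrak p_i$ for primes $\mathfrak p_i\subset R$. Each $R/\mathfrak p_i$ is a finitely generated integral domain, and $Q$ acts on it by multiplication, an element $\gamma\in Q$ acting as multiplication by its image $\tau\in (R/\mathfrak p_i)^\times$.

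The first and main step is a structural dichotomy: exponential growth forces the existence of an index $i$, an element $\gamma\in Q$, and a place $v$ of $K_i:=\mathrm{Frac}(R/\mathfrak p_i)$ with $|\tau|_v\neq 1$. I would argue by contraposition. Suppose that for every $i$, every $\gamma$ and every place $v$ one has $|\tau|_v=1$. The number-theoretic core is the fact that an element of a finitely generated field all of whose absolute values equal $1$ is a root of unity (if the element is transcendental over the prime field a valuation detects it; if it is algebraic one lands in a global field and invokes Kronecker's theorem together with the product formula). Hence every $\gamma$ acts on each $A_i/A_{i-1}$ as multiplication by a root of unity, so the image of $Q$ in $\prod_i (R/\mathfrak p_i)^\times$ is a finitely generated torsion group, hence finite; its kernel $Q_0\le Q$ has finite index and acts trivially on every quotient. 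Thus $(\gamma-1)A_i\subseteq A_{i-1}$ for all $\gamma\in Q_0$, so $Q_0$ acts unipotently on $A$ and the finite-index preimage of $Q_0$ in $G$ is nilpotent. By Milnor--Wolf this makes $G$ virtually nilpotent, of polynomial growth, a contradiction. This module-theoretic passage, and in particular the step from ``trivial at all places'' to ``root of unity'', is where I expect the real work to lie.

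Granted such a triple, set $N=R/\mathfrak p_i$ and fix $a\in A_i\setminus A_{i-1}$, so that its image $\bar a\in N$ is nonzero; replacing $\gamma$ by a suitable power $\gamma^{\pm\ell}$ I may assume $|\tau|_v>2$. Note that $\tau$ is not a root of unity, so $\gamma$ has infinite order in $Q$. Choosing a lift $t\in G$ of $\gamma$, conjugation by $t$ acts on $A$ as multiplication by $\tau$. I then define $\phi:\mathbb Z_2\wr\mathbb Z\to G$ on a configuration--cursor pair $(f,m)$, with $f(k)\in\{0,1\}$ finitely supported, by $\phi(f,m)=\big(\sum_k f(k)\,\tau^k a\big)\,t^{m}$, the sum taken in $A$. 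The key point is that $\phi$ is Lipschitz for the standard generators: right multiplication by $t^{\pm1}$ changes $\phi(f,m)$ by $t^{\pm1}$, while toggling the lamp under the cursor sends $\phi(f,m)$ to $\phi(f,m)\cdot\big(t^{-m}(\pm\tau^{m}a)t^{m}\big)=\phi(f,m)\cdot(\pm a)$, a bounded move because the powers of $\tau$ cancel. Thus each edge of the Cayley graph maps to a path of uniformly bounded length.

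Finally I would check injectivity, which upgrades $\phi$ to an injective regular map. If $\phi(f,m)=\phi(f',m')$, then projecting to $Q$ and using that $\gamma$ has infinite order gives $m=m'$, whence $\sum_k (f(k)-f'(k))\tau^k a=0$ in $A$. Passing to the quotient domain $N$, where $\bar a\neq 0$, this forces $\sum_k g(k)\tau^k=0$ with digits $g(k)=f(k)-f'(k)\in\{-1,0,1\}$. Evaluating $|\cdot|_v$ at the top nonzero digit and using $|\tau|_v>2$ (or simply the ultrametric inequality when $v$ is non-archimedean) shows this sum has positive absolute value unless $g\equiv 0$; hence $f=f'$. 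Being injective and Lipschitz, $\phi$ is the desired injective regular map, so the theorem follows once the structural step of the second paragraph is established.
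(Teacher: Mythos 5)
Your proof is correct, and it reaches the paper's conclusion by a genuinely different route for the structural step, while the final embedding is essentially the same device as the paper's Lemma \ref{lemnous}: lamps go to commuting conjugates $t^k a t^{-k}$ of a fixed element $a$ of the abelian derived subgroup, the cursor goes to $t^m$, the Lipschitz bound comes from the cancellation $t^{-m}(t^m a^{\pm 1}t^{-m})t^m = a^{\pm1}$, and injectivity comes from the top-digit geometric-series estimate at a place where the acting element has absolute value greater than $2$. The difference is how you produce that place. The paper invokes Groves' theorem (Proposition \ref{lemgr}) to obtain an exponential-growth image of $G$ in an affine group $K\rtimes K^\times$, applies Tits' lemma (Lemma \ref{lemti}) to an element of the multiplicative part, and pulls the construction back through the surjection $G\twoheadrightarrow\Gamma$. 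You stay inside $G$: P.~Hall's finiteness of $A=[G,G]$ as a $\mathbb{Z}[Q]$-module, a prime filtration with quotients $R/\mathfrak{p}_i$, and a contrapositive --- if every $\gamma\in Q$ acts on every $R/\mathfrak{p}_i$ as a root of unity, then a finite-index $Q_0\le Q$ acts unipotently along the filtration, its preimage in $G$ is nilpotent of class at most $n+1$, and $G$ has polynomial growth. This is in effect an inlined proof of (a weak form of) Groves' result, whose own proof runs through the same module-theoretic territory; both routes rest on the identical number-theoretic input, since your ``all absolute values $1$ implies root of unity'' for finitely generated fields is exactly Lemma \ref{lemti}. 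What your version buys is a construction performed directly in $G$, with no choice of affine representation and with the dichotomy of Corollary \ref{cor: dichotomy} emerging transparently; what it costs is redoing structure theory the paper outsources. Two harmless slips to fix in a write-up: conjugation by $t$ acts on all of $A$ as the $\mathbb{Z}[Q]$-module action of $\gamma$, not as multiplication by $\tau$ (that holds only on the subquotient $A_i/A_{i-1}$), so your formula should be read as $\phi(f,m)=\bigl(\prod_k t^k a^{f(k)} t^{-k}\bigr)t^m$, the product being unambiguous because all conjugates lie in the abelian group $A$ and the injectivity test being carried out after projecting to the domain $A_i/A_{i-1}\cong R/\mathfrak{p}_i$ where $\bar a\neq 0$; and your appeal to Milnor--Wolf points the wrong way --- you have shown $G$ is virtually nilpotent directly, and what you need is that virtually nilpotent groups have polynomial growth (Wolf, Bass--Guivarc'h), not the Milnor--Wolf dichotomy.
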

We recall that an injective regular map is nothing but an injective Lipschitz map.
Using Milnor-Wolf theorem and the computation of separation profiles of virtually nilpotent groups \cite{HMT,CG}, we obtain the following dichotomy for metabelian groups.

\begin{cor}
\label{cor: dichotomy}
	Let $G$ be a finitely generated metabelian group.
	Then,
	\begin{itemize}
		\item either $G$ is virtually nilpotent and satisfies $\Sep_G(v) \asymp v^{1-1/d}$, with $d$ being the degree of growth of $G$,
		\item or there exists a regular map $\mathbb Z_2 \wr \mathbb Z \to G$ and $\Sep_G(v) \succeq \frac v{\log v}$.
	\end{itemize}
\end{cor}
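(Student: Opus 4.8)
The plan is to reduce everything to Theorem \ref{teomet} via the Milnor--Wolf dichotomy. Since $G$ is metabelian, it is in particular solvable, so the Milnor--Wolf theorem applies: $G$ has either polynomial growth or exponential growth, and these two cases are mutually exclusive. I would treat them separately and show that they correspond exactly to the two alternatives in the statement.

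In the polynomial growth case, the Milnor--Wolf theorem (for solvable groups, polynomial growth forces virtual nilpotency) guarantees that $G$ is virtually nilpotent. I would then invoke the computation of separation profiles for virtually nilpotent groups carried out in \cite{HMT, CG}: if $d$ denotes the degree of growth of $G$, then $\Sep_G(v) \asymp v^{1-1/d}$. This yields the first bullet verbatim.

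In the exponential growth case, Theorem \ref{teomet} provides an injective regular map $\mathbb{Z}_2 \wr \mathbb{Z} \to G$. Applying the monotonicity of separation profiles under regular maps (the proposition of Benjamini, Schramm and Tim\'ar recalled above) gives $\Sep_{\mathbb{Z}_2 \wr \mathbb{Z}} \preceq \Sep_G$. Combining this with the value $\Sep_{\mathbb{Z}_2 \wr \mathbb{Z}}(v) \asymp \frac{v}{\log v}$, established in Proposition \ref{propseplamp} (see also \cite[Theorem 4.3]{HMTLie}), produces the lower bound $\Sep_G(v) \succeq \frac{v}{\log v}$, which is the second bullet.

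No genuine obstacle arises at this stage: all the substance of the argument is contained in Theorem \ref{teomet} and in the two black boxes just cited, so the difficulty lies entirely upstream in the construction of the map. The only points requiring minor care are that the two cases are exhaustive and mutually exclusive, which is precisely the content of Milnor--Wolf for solvable groups, and that the two resulting behaviours are genuinely different: since $v^{1-1/d} \prec \frac{v}{\log v}$ (the right-hand side dominates every $v^{1-\epsilon}$), a virtually nilpotent metabelian group realizes only the first alternative and the dichotomy is non-degenerate.
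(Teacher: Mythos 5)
Your proposal is correct and matches the paper's own (implicit) argument exactly: the paper derives the corollary precisely from the Milnor--Wolf theorem, the computation $\Sep_G(v) \asymp v^{1-1/d}$ for virtually nilpotent groups from \cite{HMT,CG}, and Theorem \ref{teomet} combined with the Benjamini--Schramm--Tim\'ar monotonicity and Proposition \ref{propseplamp}. Nothing is missing, and your closing remark that the two alternatives are mutually exclusive (since $v^{1-1/d} \prec \frac{v}{\log v}$) is a correct, if unstated-in-the-paper, sanity check.
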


Following Definition 1.3. from \cite{HMTLie}, the first and the second case corresponds to the situation where $G$ is respectively \emph{analytically thin} and \emph{analytically thick}.
Our result implies that every finitely generated metabelian group is either analytically thin or analytically thick.
This partially answers Questions 8.3 and 8.5 from \cite{HMTLie}, which are stated in the more general context of solvable groups.
This statement can be compared with the dichotomies obtained by Hume Mackay and Tessera in \cite[Theorem 1.5]{HMTLie} for connected unimodular Lie groups and \cite[Corollary 1.6]{HMTLie} for polycyclic groups.

A very natural and interesting special case of \cite[Question
8.5]{HMTLie} is whether $\mathbb Z_2 \wr \mathbb Z$ has a regular maps into $\mathrm{Osc}$.
A positive answer would immediately imply that $\mathbb Z_2 \wr \mathbb Z$ has a regular map into every finitely generated polycyclic group, using  \cite[Theorem 1.11]{HMTLie}.
\\

\textit{About proof techniques:} Let us recall that an affine group is a subgroup of $k \rtimes k^\times$, where $k$ a field.
Theorem \ref{teomet} is obtained in two steps.
First (\S\ref{sec: lamp and affine}), we construct regular maps from the lamplighter group to metabelian groups that are quotients of an affine group satisfying some conditions.
Second (\S\ref{sec: proof themeta}), we use previous results of Tits and Groves implying that we can realize every finitely generated metabelian group as a group satisfying the conditions of the previous step.\\

\paragraph{Comments and questions}
As explained above, it is not known whether there exists a regular map from the lamplighter group to every finitely generated solvable group which is not virtually nilpotent.

It is also natural to ask whether we can improve Theorem \ref{teomet} with coarse or quasi-isometric embeddings instead of regular maps.
\begin{ques}
	Let $G$ be a finitely generated metabelian group having exponential growth.
	Does there exists a coarse/quasi-isometric embedding $\mathbb Z_2 \wr \mathbb Z \to G$?
\end{ques}

Corollary \ref{cor: dichotomy} does not tell what the separation profile of finitely generated metabelian groups having exponential growth is; it only gives a lower bound.
On the other hand, groups whose separation profile is not dominated by $\frac v{\log v}$ are quite rare (see  \cite[Corollary 5.6]{CG}).
We can therefore ask the following question:
\begin{ques}
Do we have $\Sep_G(v) \preceq \frac v{\log v}$ for every finitely generated metabelian group?
\end{ques}
Another natural question is:
\begin{ques}
Let $G$ be a finitely generated group of exponential growth. Is there a quasi-isometric embedding of a regular (non-amenable) tree in $G$?
\end{ques}
See de Cornulier \& Tessera \cite{CorTes} for related work in the case of soluble groups and Benjamini \& Schramm \cite{BenSch} for the case of non-amenable groups.\\

\textit{Organization of the paper:}
In \S\ref{sec: reg maps}, we recall the basic definitions, give few examples and prove Theorem \ref{teomet}.
In \S\ref{sec: separation of lamplighter}, we give an elementary proof of the computation of the separation profile of the lamplighter group.\\

\textit{Acknowledgements:}
We are grateful to David Hume for interesting discussions and comments on an earlier version of the paper.

\section{Regular maps}
\label{sec: reg maps}

\begin{dfn}\cite{BST}
Let $A$ and $B$ be simplicial graphs with bounded degree.
A map $\phi: A \to B$ is said to be regular if the two following conditions hold:\\
{\bfseries (R1)} There is a $K>0$ such that $d_B\big( \phi(x),\phi(y) \big) \leq K d_A(x,y)$ for every $x,y \in A$,\\
{\bfseries (R2)} There is a $C \geq 1$ such that $|\phi^{-1}(x)| \leq C$ for every $x \in A$.
\end{dfn}

Observe that a composition of regular maps is regular (one can make the constants explicit).
The difference between Cayley graph and groups will be omitted (since the choice of Cayley graph does not play a role).

\subsection{Some simple examples}

As explained in the introduction, quasi-isometric embeddings, subgroup inclusions of finitely generated groups and more generally coarse embeddings are examples of regular maps.
Let us give some examples of regular maps from the lamplighter group $L = \zz_2 \wr \zz$ to other groups.
The first example is the lamplighter group $\zz \wr \zz$.
\begin{exa}
\label{example: zz}
Let $W =\zz \wr \zz$ generated by the usual generating sets $s$ is the switch (\ie as a function $\zz \to \zz$ it is the Dirac mass at 0) and $w$ is the walk (a generator of $\zz$). 
Consider the same generating set for $L$ (except that now $s$ is a function $\zz \to \zz_2$).
The map $L \to W$ which is the inclusion (seeing the set $\zz_2 = \{0,1\}$ as a subset of $\zz$) is a regular map, with $K=C=1$.
\end{exa}

This second example is Baumslag-Solitar groups and introduces the main ideas that will come into play in the proof of Theorem \ref{teomet}.
\begin{exa}
Here, we consider soluble Baumslag-Solitar groups of exponential growth (or the [exponential growth] metabelianisation of the non-soluble ones).
More precisely let $p$ and $q$ be coprime integers with $pq \neq \pm 1$, and let $M_{p,q} = \zz[\tfrac{1}{pq}] \rtimes \zz$ (where $\zz$ acts by multiplication by $\tfrac{p}{q}$).
$M_{p,q}$ is generated by $a = 1 \in \zz[\tfrac{1}{pq}]$ and $b = 1 \in \zz$.
The generator~$a$ acts by~$(x,i)a = (x + \frac{p^i}{q^i}, i)$, and~$b$ acts by~$(x,i)b = (x,i+1)$.

Note that $M_{p,q}$ is a quotient of $W = \zz \wr \zz$.
Indeed, since~$\zz[\tfrac{1}{pq}]$ is abelian and normal in~$M_{p,q}$,~$a$ and its conjugates commute.
The composition of the regular map from $L = \zz_2 \wr \zz$ to $W$ with the quotient map $W \surj M_{p,q}$ is given by $(f,i) \mapsto \left( \sum_{j \in \Z} f(j) \frac{p^j}{q^j} , i \right)$.
Basically, this map can be obtained by writing any $\ell \in L$ as a reduced word and then transposing this word in $M_{p,q}$, with $s \mapsto a$ and $w \mapsto b$.
This is a regular map, again with $K=C=1$.
\end{exa}

%
%
%
No regular map $M_{p,q} \to L$ exist because $M_{p,q}$ has asymptotic dimension $2$ and $L$ has asymptotic dimension $1$.


%
%
%
\begin{exa}
A typical example of a finitely generated amenable but non solvable group is $F \wr \zz$, where $F$ is a finite simple group.
This group is quasi-isometric to $\zz_F \wr \zz$, which gives a natural quasi-isometric embedding of $\zz_2 \wr \zz$, with a similar approach as in Example \ref{example: zz}.
\end{exa} 
\begin{exa}
Another example of an amenable non-solvable group is $\mathrm{Sym}_{\mathrm{fin}}(\zz) \rtimes \zz$ where by $\mathrm{Sym}_{\mathrm{fin}}(\zz)$ we denote the group of finitely supported permutations on $\zz$, and $\zz$ acts by shifting (the infinite cyclic permutation). In that case sending $s \mapsto$ some finite permutation and $w \mapsto$ a large enough power of the shift (so as to send the permutation to one with a disjoint support) provides an injective regular map $L \to \mathrm{Sym}_{\mathrm{fin}}(\zz) \rtimes \zz$. 
\end{exa}
%
Note that the quotient map $\pi: W \surj M_{p,q}$ is not a regular map because it has infinite preimages.
Many other interesting groups, such as those constructed by Brieussel and Zheng \cite{brieusselzheng} contain a lamplighter group as a subgroup.

\subsection{Metabelian groups}

This section is split into two parts, \S\ref{sec: lamp and affine} where we prove how to embed the lamplighter group into metabelian sections of some affine groups, see Lemma \ref{lemnous}, and \S\ref{sec: proof themeta} were we prove Theorem \ref{teomet} in the case of metabelian groups.

\subsubsection{Lamplighter and affine groups}
\label{sec: lamp and affine}
%
%
%
%

Let $k$ be a field. Then $\A(k)$ will denote the group of affine transformations of $k$. 
It can be seen as the set of bijective maps $x \mapsto a x + b$ (where $a \in k^\times$ and $b \in k$), where the group operation is composition.
This gives a natural action on $k$. 
Alternatively, it consists in the $2 \times 2$ matrices which can be written as $\big(\begin{smallmatrix}a & b \\ 0 & 1 \end{smallmatrix}\big)$ (again $a \in k^\times$ and $b \in k$).
Lastly, it is can also be seen as the semi-direct product $k \rtimes k^\times$ (where $k^\times$ acts by multiplication on $k$).
Consequently, there is a natural projection $\pi: \A(k) \surj k^\times$.

Recall that a local field is  $\R$, $\C$, a finite extension of $\Q_p$ or a field of Laurent series over a finite field, endowed with the appropriate norm.

\begin{lem}\label{lemnous}
Let $k$ be a local field with norm $|\cdot|_k$.
Let $\Gamma$ be a non-abelian finitely generated subgroup of $\A(k)$ such  that there exists $\gamma \in \Gamma$ such that $|\pi(\gamma)|_k \neq 1$.
Let $G$ be a finitely generated metabelian group such that there exists a surjective homomorphism $\psi\colon G \surj \Gamma$.
Then there is a regular map from $L$ to $G$.
\end{lem}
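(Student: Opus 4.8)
The plan is to build the map in two stages: first produce an \emph{injective} regular map $L \to \Gm$ that mimics the affine encoding of the lamplighter seen in the Baumslag--Solitar example, and then lift it through $\psi$ to a map $L \to G$, using metabelianity of $G$ to ensure the lift is still a homomorphism. Throughout, write $a = \pi(\gamma) \in k^\times$; replacing $\gamma$ by $\gamma\ii$ if necessary, I may assume $|a|_k > 1$, so that $a$ has infinite multiplicative order. Since $\Gm$ is non-abelian while $\A(k)/k \cong k^\times$ is abelian, every commutator of elements of $\Gm$ is a translation, and non-abelianity provides $g,h \in \Gm$ with $\tau = [g,h]$ a \emph{non-trivial} translation, say by some $c \in k^\times$.

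A direct computation shows that conjugating the translation by $c$ by any affine map with multiplicative part $u$ yields the translation by $uc$; hence for any fixed $N\ge 1$ the conjugates $\gamma^{Nj}\tau\gamma^{-Nj}$ are the translations by $a^{Nj}c$, and being translations they pairwise commute. Consequently $s \mapsto \tau,\ w \mapsto \gamma^{N}$ extends to a homomorphism $\Phi_W\colon W = \zz\wr\zz \to \Gm$ sending $(f,m)$ to the affine map with multiplicative part $a^{Nm}$ and translation part $c\sum_{j} f(j) a^{Nj}$ (up to a shift depending only on $m$). Precomposing with the regular inclusion $\iota\colon L \inj W$ of Example \ref{example: zz} gives $\Phi = \Phi_W\circ\iota\colon L \to \Gm$, which is Lipschitz as a composition of a homomorphism of finitely generated groups with $\iota$.

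The heart of the argument, and the step I expect to be the main obstacle, is proving that $\Phi$ is injective for a suitable $N$; this is where the local-field norm and the hypothesis $|\pi(\gamma)|_k \neq 1$ are essential, and it is what restricts the statement to local fields rather than arbitrary fields. If $\Phi(f,m) = \Phi(f',m')$, comparing multiplicative parts gives $a^{Nm}=a^{Nm'}$, hence $m=m'$ since $|a|_k\neq 1$, and comparing translation parts gives $\sum_{j}(f-f')(j)\,a^{Nj} = 0$ with coefficients in $\{-1,0,1\}$. In the non-archimedean case each non-zero coefficient has norm $1$, the quantities $|a|_k^{Nj}|c|_k$ are pairwise distinct, and the ultrametric inequality forces the sum to be non-zero unless $f=f'$ (any $N\ge 1$ works). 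In the archimedean case, choosing $N$ with $|a|_k^{N} > 2$ makes the top non-zero term dominate the geometric tail $\sum_{j<J}|a|_k^{Nj} \le |a|_k^{NJ}/(|a|_k^{N}-1)$, again giving $f=f'$. Thus $\Phi$ is an injective regular map $L \to \Gm$.

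Finally I lift. Choose preimages $\hat g,\hat h,\hat\gamma \in G$ of $g,h,\gamma$ under $\psi$ and set $\hat\tau = [\hat g,\hat h]$, so that $\psi(\hat\tau)=\tau$ and, crucially, $\hat\tau \in [G,G]$. Because $G$ is metabelian, $[G,G]$ is abelian and normal, so all conjugates $\hat\gamma^{Nj}\hat\tau\hat\gamma^{-Nj}$ lie in $[G,G]$ and pairwise commute --- exactly the relations needed for $s\mapsto\hat\tau,\ w\mapsto\hat\gamma^{N}$ to define a homomorphism $\hat\Phi_W\colon W \to G$. Setting $\hat\Phi = \hat\Phi_W\circ\iota\colon L \to G$, one checks $\psi\circ\hat\Phi = \Phi$, so injectivity of $\Phi$ forces $\hat\Phi$ injective, and $\hat\Phi$ is Lipschitz as before; hence it is an injective regular map $L \to G$. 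The clean point here is that taking $\tau$ to be a commutator makes its lift land automatically in the abelian subgroup $[G,G]$, which is precisely where metabelianity of $G$ is used.
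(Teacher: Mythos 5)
Your proposal is correct and follows essentially the same route as the paper: the same affine encoding $(\oplus_j \ell_j, i) \mapsto \big(c\sum_j \ell_j a^{Nj}, a^{Ni}\big)$, the same use of a commutator to produce a translation whose lift lands in $[G,G]$ (so metabelianity makes the conjugates commute), the same power-raising to get $|a|_k^N>2$, and the same top-term-dominates-the-geometric-tail estimate for injectivity. Your packaging via a homomorphism $W=\zz\wr\zz\to G$ precomposed with $L\inj W$, and your explicit choice $\hat\tau=[\hat g,\hat h]$ of a commutator of lifts, are mild (and in the latter case clarifying) variants of the paper's direct formula and its ``by construction $d\in[G,G]$'' remark, not a different argument.
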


\begin{proof}
Since $\Gamma$ is not abelian, it contains two elements $\gamma_1$ and $\gamma_2 \in \Gamma_0$ which do not commute. 
Thus $c = [\gamma_1,\gamma_2]$ is non trivial and is of the form $(b,1)$ for some $b \in k - \{0\}$.
In other words, it belongs to the abelian [additive] subgroup $k$ inside $\A(k)$.
This additive subgroup act by translation on the line $k$.

Let now $\gamma \in \Gamma$ satisfying $|\pi(\gamma)|_k \neq 1$.
It acts on the line $k$ by a homothety (or dilation) with a unique fixed point.
For convenience, one can conjugate $\Gamma$ with a translation of $\A(k)$ so that the fixed point of $\gamma$ is $0$.
Indeed, $G$ maps onto each conjugate of $\Gamma$ in $\A(k)$, since they are all isomorphic to $\Gamma$.
Concretely, $\gamma$ is now of the form $(0,a)$, for some $a \in k^\times$ satisfying $|a|_k \neq 1$, and $c$ is unchanged.
Up to raising $\gamma$ to a suitable power, we can assume that we have $|a|_k \ge 2$.

Let now $d, \delta \in G$ be such that $\psi(d) = c$ and $\psi(\delta) = \gamma$.

An element of $L$ can be written as $(\oplus_{j \in \zz} \ell_j, i)$ where only finitely many $\ell_j$ are non-zero (in $\Z_2$) and $i \in \zz$ is the lamplighter position.
Given the sequence $\ell_j$ each belonging to $\Z_2$, let $\ell_j'$ be the sequence of 0 and 1 ($\in \zz$) obtained by abusing notations.
Then we define:
\begin{align*}
  \phi \colon L & \longrightarrow G  \\
  \big(\oplus_{j \in \zz} \ell_j, i \big) & \longmapsto \big( \prod_{j \in \zz} \delta^j d^{\ell_j'} \delta^{-j} \big) \delta^i
\end{align*}
We should mention now that by construction $d$ and its conjugate all belong to $[G,G]$.
Since $G$ is metabelian, they all commute with each other.
This implies in particular that, in the definition of $\phi$, the order in which the product denoted by $\prod$ is performed has no importance.

In order to establish regularity, it is sufficient to check that $\phi$ is Lipschitz (with respect to some generating set)\footnote{Changing generating sets will not affect the fact that this map is Lipschitz but may change the Lipschitz constant.
} and that it is injective.

\begin{claim}
The map $\phi$ is Lipschitz.
\end{claim}

\begin{proof}
We will take $\{w,s\}$ as generating set for $L$, and some finite arbitrary generating set $S$ of $G$.
We denote by $|\cdot|_S$ the word length defined on $G$ by $S$.

It suffices to check that pairs of elements $\lambda,\tilde\lambda \in L$ at distance $1$ from each other are mapped by $\phi$ at bounded distance.
Two cases has to be considered, one for each generator of $L$:
\begin{itemize}
	\item $\lambda = \big(\oplus_{j \in \zz} \ell_j, i \big)$ and $\tilde\lambda = \big(\oplus_{j \in \zz} \ell_j, i \pm 1 \big)$.
	In this case, it is immediate to see that we have $\phi(\tilde\lambda) = \phi(\lambda) \delta^{\pm 1}$.
	In particular $\phi(\lambda)$ and $\phi(\tilde\lambda)$ are at distance at most $|\delta|_S$.
	\item $\lambda = \big(\oplus_{j \in \zz} \ell_j, i \big)$ and $\tilde\lambda = \big(\oplus_{j \in \zz} \ell_j + \delta_i, i\big)$, where $\delta_i$ denotes the sequence taking the value 0 everywhere except at position $i$ where the value is 1.
	We have $\phi(\lambda) d^{\pm 1} = \big( \prod_{j \in \zz} \delta^j d^{\ell_j'} \delta^{-j} \big) \delta^i d^{\pm1} \delta^{-i} \delta^i = \big( \prod_{j \in \zz\setminus \{i\}} \delta^j d^{\ell_j'} \delta^{-j} \times \delta^i d^{\ell_i'\pm1} \delta^{-i} \big) \delta^i$, where the last equality comes from the fact explained above that the conjugates of $d$ commute with each other.
	In particular, one of $\phi(\lambda) d$ or $\phi(\lambda) d^{-1}$ will equal $\phi(\tilde\lambda)$.
	Thus, $\phi(\lambda)$ and $\phi(\tilde\lambda)$ are at distance at most $|d|_S$.
\end{itemize}
This proves that $\phi$ is Lipschitz.
With the generating sets considered above, the Lipschitz constant obtained is $\max(|\delta|_S,|d|_S)$.
\end{proof}

\begin{claim}
The map $\phi$ is injective.
\end{claim}

\begin{proof}
Let us prove the stronger statement that the map $\psi\circ\phi$ is injective.
Since for every $i,j \in \zz$, $\psi(\delta^j d \delta^{-j}) = (b a^j,1)$ and $\psi(\delta^i) = (0,a^i)$, we have the following expression for $\psi\circ\phi$:
\begin{align*}
  \psi\circ\phi \colon L & \longrightarrow \Gamma \subset k \rtimes k^\times  \\
  \big(\oplus_{j \in \zz} \ell_j, i \big) & \longmapsto \big( \sum_{j \in \zz} (\ell_j' b) a^j, a^i \big)
\end{align*}

Let $\lambda = \big(\oplus_{j \in \zz} \ell_j, i \big)$ and $\tilde\lambda = \big(\oplus_{j \in \zz} \tilde\ell_j, \tilde i \big)$ be two distinct elements of $L$.
We want to prove $\psi\circ\phi(\lambda) \neq \psi\circ\phi(\tilde\lambda)$.
Since $|a|_k \neq 1$, $a$ is not a root of the unity.
Then, it follows from the expression above that we have $\psi\circ\phi(\lambda) \neq \psi\circ\phi(\tilde\lambda)$ whenever $i \neq \tilde i$.
So, we can assume $i=\tilde i$ without any loss of generality.

Let us consider how $\psi\circ\phi(\lambda)$ and $\psi\circ\phi(\tilde{\lambda})$ act on the zero element of the affine line, $0\in k$. 
If $j_M = \displaystyle \max \lbrace j \in \zz \mid \ell_j \neq \tilde{\ell}_j \rbrace$, then 
\begin{align*}
	|\psi\circ\phi(\lambda) \cdot 0 - \psi\circ\phi(\tilde{\lambda}) \cdot 0 |_k 
	&\ge \big| \sum_{j \in \zz} (\ell_j' b) a^j - \sum_{j \in \zz} (\tilde{\ell_j'} b) a^j \big|_k 
	\\&\ge |ba^{j_M}|_k - \big| \sum_{j < j_M} (\ell_j' - \tilde{\ell_j'}) b a^j \big|_k
	\\&\ge \tfrac{1}{2} |ba^{j_M}|_k.
\end{align*}
The last step is follows from the fact that we have $| \sum_{j < j_M} (\ell_j' - \tilde{\ell_j'}) b a^j |_k  \le \frac12 | b a^{j_M}|_k$, a consequence on our assumption that $|a|_k \ge 2$.
This implies the map $\psi\circ\phi$ is injective.
\end{proof}

We have proven that $\phi: L \to G$ is a regular map, which ends the proof of Lemma \ref{lemnous}.
\end{proof}

\subsubsection{Proof of Theorem \ref{teomet}}
\label{sec: proof themeta}

The proof of Theorem \ref{teomet} relies on previous results.
The first one is classical:
\begin{lem}\label{lemti} \emph{(see Tits \cite{Tits} or Breuillard \cite[Lemma 2.2]{Br})}
Let $K$ be a finitely generated field and $\alpha \in K$.
\begin{itemize}
\item If $\alpha^{-1}$ is not an algebraic integer 
(\ie over $\Z$ if $\cha(K) = 0$ or over $\F_p$ if $\cha(K) = p$),
then there exists an embedding $\sigma\colon K \inj k$ into a non-archimedean local field $k$ such that $|\sigma(\alpha)|_k \neq 1$.
\item If $\alpha$ is an algebraic unit which is not a root of unity, then there exists an embedding $\sigma\colon K \inj k$ into an archimedean local field $k$ such that $|\sigma(\alpha)|_k \neq 1$.
\end{itemize}

\end{lem}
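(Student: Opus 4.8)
\emph{Overall strategy.} Both statements reduce to a single task: produce an absolute value $|\cdot|_v$ on $K$, archimedean or non-archimedean as required, with $|\alpha|_v \neq 1$, and then realise the completion (or a suitable sub-completion) as a local field $k$ into which $K$ embeds. The two hypotheses translate cleanly into valuation-theoretic language. Recall that the integral closure of the prime ring $\mathcal{O}_0$ (which is $\Z$ if $\cha(K)=0$ and $\F_p$ if $\cha(K)=p$) inside $K$ equals the intersection of all valuation rings of $K$ containing $\mathcal{O}_0$. Hence ``$\alpha^{-1}$ is not an algebraic integer'' says exactly that $\alpha^{-1}$ escapes some such valuation ring, which is the hook for the first bullet; and ``$\alpha$ is an algebraic unit which is not a root of unity'' places us, via Kronecker's theorem, in a situation where $\alpha$ has an archimedean conjugate of modulus $\neq 1$, which drives the second bullet.

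\emph{Non-archimedean case.} Since $\alpha^{-1}$ is not integral over $\mathcal{O}_0$, the characterisation above yields a valuation ring $\mathcal{O}_v \supseteq \mathcal{O}_0$ of $K$ with $\alpha^{-1} \notin \mathcal{O}_v$, i.e.\ $v(\alpha^{-1})<0$ and so $|\alpha|_v \neq 1$. This $v$ need be neither discrete nor of finite residue field, so the completion $K_v$ need not be a local field; the remaining work is to \emph{specialise}. I would choose a transcendence basis of $K$ over the prime field and successively specialise the transcendental coordinates to values in a finite extension of the prime field, arranging at each step that the non-integrality of $\alpha^{-1}$ (equivalently, a zero or pole of $\alpha$) is preserved. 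This reduces matters to a number field (in characteristic $0$) or a global function field over a finite field (in characteristic $p$), where $v$ restricts to a genuine place whose completion is a finite extension of $\Q_p$, respectively $\F_q((t))$ --- in both cases a non-archimedean local field --- and where $|\alpha|_k \neq 1$ survives.

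\emph{Archimedean case.} Here necessarily $\cha(K)=0$, since in characteristic $p$ every element algebraic over $\F_p$ is a root of unity. As $\alpha$ is an algebraic integer that is not a root of unity, Kronecker's theorem supplies an embedding $\Q(\alpha) \inj \C$ under which $|\alpha| \neq 1$ (not all conjugates of a non-torsion unit can lie on the unit circle). Because $K$ is finitely generated over $\Q(\alpha)$ and $\C$ is algebraically closed of infinite transcendence degree, this embedding extends to an embedding $\sigma\colon K \inj \C$; one may then take $k=\C$ (or $k=\R$ when the chosen conjugate is real), an archimedean local field, with $|\sigma(\alpha)|_k \neq 1$.

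\emph{Main obstacle.} The crux is the specialisation step in the non-archimedean case: passing from the abstract valuation $v$ on the possibly high--transcendence--degree field $K$ to an honest local field while keeping $|\alpha| \neq 1$. Controlling the residue field and the value group simultaneously --- so that the completion is genuinely local rather than merely complete --- is where the finite generation of $K$ must be used decisively, and it is the only part of the argument that is not formal.
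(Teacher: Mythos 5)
The paper itself offers no proof of this lemma --- it is quoted from Tits \cite{Tits} and Breuillard \cite[Lemma 2.2]{Br} --- so your sketch must be measured against the classical argument in those references. Your archimedean bullet is complete and correct: for a unit $|N_{\Q(\alpha)/\Q}(\alpha)|=1$, so if every conjugate had modulus $\le 1$ all would have modulus exactly $1$ and Kronecker would make $\alpha$ a root of unity; hence some embedding $\Q(\alpha)\inj\C$ has $|\alpha|\neq 1$, and it extends to $K$ because $\C$ is algebraically closed of infinite transcendence degree. Your reformulation of the first bullet via ``integral closure $=$ intersection of valuation rings over the prime ring'' is also the right entry point.

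The non-archimedean case, however, has a genuine gap, sitting exactly where you located your ``main obstacle''. Specializing the transcendental coordinates means passing to residue fields of places of $K$, and residue maps are \emph{not injective on $K$}: they kill the maximal ideal of each place being specialized. So even if non-integrality of $\alpha^{-1}$ is preserved at every step, the outcome is an embedding of a residue field of $K$ (a number field or global function field) into a local field --- not the required embedding $\sigma\colon K\inj k$ of $K$ itself. The classical repair (Tits' Lemma 4.1; Cassels' embedding theorem in \emph{Local Fields}, Ch.~5) is different in kind: one sends a transcendence basis of $K$ to \emph{algebraically independent} elements of the local field --- available because $\Q_p$ and $\F_q((t))$ have uncountable transcendence degree over their prime fields --- chosen $v$-adically close to the good specialization values, and then uses Krasner/Hensel to embed the remaining finite algebraic extension into a finite extension of $\Q_p$ (resp.\ of $\F_q((t))$), which is again a non-archimedean local field, while $|\sigma(\alpha)|_k\neq 1$ survives by continuity. (Note also that in characteristic $p$ the hypothesis forces $\alpha$ to be transcendental over $\F_p$ --- otherwise $\alpha^{-1}\in\overline{\F_p}$ is integral --- and one may then take $k$ to be a finite extension of $\F_p((\alpha^{-1}))$ directly, with $|\alpha|_k>1$.) Relatedly, your framing of the obstacle as ``controlling the residue field and value group so that the completion $K_v$ is local'' points in the wrong direction: for $K$ of positive transcendence degree no completion of $K$ at the abstract valuation $v$ is a local field, and the construction must abandon $v$'s completion in favor of the approximation-plus-Krasner embedding just described.
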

The second ingredient goes back to Groves \cite{Gr} but may also be found in Breuillard \cite[Proposition 4.1 in \S{}4.2]{Br}).
We recall that $\A(K)$ denotes the affine group $K \rtimes K^\times$.
\begin{prop}\label{lemgr}
If $G$ is a finitely generated metabelian group of exponential growth, then there is a field $K$ and a map $\rho: G \to \A(K)$ so that the image is also of exponential growth.
\end{prop}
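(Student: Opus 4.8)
The plan is to translate the statement into commutative algebra over the group ring of the abelianization of $G$, and to read off the affine action from a suitably chosen prime ideal.

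\emph{Setup.} Since $G$ is metabelian, $A \ce [G,G]$ is abelian and normal, and $Q \ce G/[G,G]$ is a finitely generated abelian group. Conjugation turns $A$ into a module over the group ring $R \ce \zz[Q]$, and by a theorem of P.~Hall the module $A$ is finitely generated over $R$; note also that $R$ is Noetherian, being a finitely generated commutative ring. Each $q \in Q$ is invertible in $R$, hence maps to a unit in every quotient domain $R/\mathfrak p$, and this is what will let $Q$ act on a field by dilations.

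\emph{Main step (the obstacle).} The crux is to produce a prime $\mathfrak p$ in the support of the $R$-module $A$ such that, writing $K \ce \mathrm{Frac}(R/\mathfrak p)$ and letting $\chi \colon Q \to K^\times$ be the character induced by $R \surj R/\mathfrak p \inj K$, the image $\chi(Q)$ contains an element that is \emph{not} a root of unity. This is exactly where the hypothesis of exponential growth is used, and I expect it to be the main difficulty, since it is the point where the growth hypothesis must be converted into an algebraic statement about eigenvalues of the action. I would argue the contrapositive: by Milnor--Wolf, exponential growth is equivalent to $G$ not being virtually nilpotent, whereas if for every minimal prime of the support of $A$ the image of $Q$ in the corresponding residue field consisted only of roots of unity, then the conjugation action of $Q$ on $A$ would be virtually unipotent, forcing $G$ to be virtually nilpotent. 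This is the substantive content borrowed from Groves and rests on the associated-prime analysis of the finitely generated $R$-module $A$.

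\emph{Construction of $\rho$.} With such a $\mathfrak p$ fixed, and choosing it minimal in the support of $A$, one has $A \otimes_R K \neq 0$, so there is a nonzero $R$-linear map $\mu \colon A \to K$, where $K$ carries the $Q$-module structure given by $\chi$. I would then assemble the character $\chi$ and the module map $\mu$ into a homomorphism $\rho \colon G \to K \rtimes K^\times = \A(K)$ sending $A$ into the translation subgroup $K$ and inducing $\chi$ on $Q$. Concretely $\rho$ has the form $g \mapsto (f(g), \chi(p(g)))$, where $p \colon G \to Q$ is the projection and $f \colon G \to K$ is a crossed homomorphism restricting to $\mu$ on $A$; that such an $f$ exists, possibly after passing to a convenient quotient of $G$, is a routine (if slightly delicate) cohomological verification, which I regard as secondary to the eigenvalue step above. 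Finally, the image $\rho(G) \le \A(K)$ contains a nonzero group of translations $\mu(A) \subseteq K$ together with a dilation $\chi(q_0)$ by a non-root-of-unity; such a subgroup of $\A(K)$ is not virtually nilpotent, as it contains a Baumslag--Solitar-- or lamplighter-type subgroup like those in the earlier examples. Hence, again by Milnor--Wolf, $\rho(G)$ has exponential growth, and $K = \mathrm{Frac}(R/\mathfrak p)$ is a finitely generated field, as required.
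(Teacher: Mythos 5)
You should know that the paper contains no proof of this proposition at all --- it is quoted from Groves and from Breuillard (Proposition 4.1 in \S{}4.2 of the cited paper) --- and your sketch reconstructs essentially that same standard argument: regard $A=[G,G]$ as a finitely generated module over $R=\zz[Q]$ via P.~Hall, use Milnor--Wolf to find a minimal prime of the support whose character $\chi\colon Q\to K^\times$, $K=\mathrm{Frac}(R/\mathfrak p)$, has an element of infinite multiplicative order, and assemble $\chi$ with a nonzero $R$-linear map $A\to K$ into a representation $G\to\A(K)$. Your outline is correct and matches the approach of the sources the paper defers to: the two steps you flag do go through (the extension step works because a central $q_0$ with $\chi(q_0)-1$ invertible in $K$ forces $H^*(Q,K)=0$, so one can push out the extension $1\to A/\ker\mu\to G/\ker\mu\to Q\to 1$ along $A/\ker\mu\inj K$ and split it), the only loose spot being the final growth claim in the case where $\chi(q_0)$ is an algebraic unit, where the image is of type $\zz^d\rtimes\zz$ rather than Baumslag--Solitar or lamplighter and one should instead invoke Kronecker's theorem to get an eigenvalue off the unit circle.
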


We can now prove Theorem \ref{teomet}.
\begin{proof}[Proof of Theorem \ref{teomet}]
(see Breuillard \cite[\S{}3.1]{Br})
By Proposition \ref{lemgr} there is a field $K$ and a map $\rho: G \to \A(K)$ whose image is finitely generated metabelian but not virtually nilpotent.
Since only the image of $\rho$ matters to us, it can be assumed that $K$ is finitely generated.
Recall $\pi \colon \A(K) \surj K^\times$ the natural projection, which is a group homomorphism.

The group $\Gamma \ce \rho(G)$ being not virtually nilpotent, $Q \ce \pi(\Gamma)$ is not included in the roots of unity of $K$ (otherwise $Q$ would be finite [since it is finitely generated], and $\Gamma$ would be virtually abelian).

We can distinguish two cases:
\begin{itemize}
	\item If $Q$ lies in the subgroup of $K^\times$ consisting of algebraic units\footnote{this corresponds to the case where $\Gamma$ is polycyclic, see \cite[Lemma 3.1 in \S{}3.2]{Br}}, let $\alpha$ be an element of $Q$ which is not a root of unity.
	By Lemma \ref{lemti}, there exists an embedding $\sigma$ of $K$ into an archimedean local field $k$ such that $|\sigma(\alpha)|_k \neq 1$.
	\item If $Q$ is not contained in the subgroup of algebraic units, then there is a $\alpha \in Q$ such that $\alpha^{-1}$ is not an algebraic integer.
	By Lemma \ref{lemti}, there is an embedding $\sigma$ into a non-archimedean local field $k$ with $|\sigma(\alpha)|_k \neq 1$. \end{itemize}
In both cases, we obtain a surjective homomorphism $G \surj \Gamma \subset k \rtimes k^\times$, with $k$ being a local field, such that $\Gamma$ is not abelian and contains an element $\alpha$ satisfying $|\alpha|_k \neq 1$.
Applying Lemma \ref{lemnous}, we obtain a regular map $L \to G$, which ends the proof of Theorem \ref{teomet}.
\end{proof}

\section{Separation profile of the lamplighter group}
\label{sec: separation of lamplighter}
 
The goal of this section is to give an elementary proof of the following proposition:

\begin{prop}\cite[Theorem 4.3]{HMTLie}
\label{propseplamp}
	The lamplighter group has the following separation profile:
	\[\Sep_{\Z_2\wr\Z}(v) \asymp \frac v{\log v}.\]
\end{prop}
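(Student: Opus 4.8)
The plan is to prove the two matching bounds $\Sep_{\Z_2\wr\Z}(v)\preceq \frac{v}{\log v}$ and $\Sep_{\Z_2\wr\Z}(v)\succeq \frac{v}{\log v}$ separately, working throughout in the standard Cayley graph with generators $w$ (walk) and $s$ (switch), in which two elements are adjacent exactly when they differ by a single cursor step or by a single lamp-flip at the current cursor position. The structural feature exploited in both directions is that an $s$-edge only flips the lamp under the cursor, so along any edge-path the cursor position changes by at most one and the lamps outside the range of visited positions are frozen. Consequently, if $A$ is any set of vertices whose cursor positions lie in a window $[c,d]$ of width $W=d-c+1$, then every connected component of the induced subgraph on $A$ has at most $W2^{W}$ vertices: within a component the external lamps are constant, leaving $2^{W}$ lamp configurations on $[c,d]$ and $W$ cursor positions.

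For the \textbf{upper bound}, fix $F$ with $|F|\le v$ and set $m=\lfloor\tfrac14\log_2 v\rfloor$. Partition the cursor-axis $\Z$ into consecutive blocks of length $m$; inside each block choose a cursor position whose layer $F_i=\{(\ell,i)\in F\}$ is sparsest, so it carries at most a $\tfrac1m$ fraction of the mass of its block, and let $C$ be the union of the chosen layers. Summing over blocks gives $|C|\le \tfrac{|F|}{m}\preceq \tfrac{v}{\log v}$. Since an edge changes the cursor by at most one, removing these full layers disconnects $F$ into strips lying strictly between consecutive chosen positions; each strip has cursor-width at most $2m$, so by the structural observation its components have size at most $2m\cdot 2^{2m}\le \tfrac12(\log_2 v)\sqrt v\le v/2$ for large $v$. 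Thus $C$ witnesses $\Cut F\le|C|\preceq v/\log v$, and taking the supremum over $F$ gives the bound.

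For the \textbf{lower bound} I would exhibit a single hard set. Let $n\asymp\log_2 v$ and take the box $Z_n=\{(\ell,i):\mathrm{supp}(\ell)\subseteq[0,n),\ 0\le i\le n\}$, which is connected, has $|Z_n|=(n+1)2^{n}\asymp v$, and whose cursor sweeps across the whole lamp window. I claim $\Cut Z_n\gtrsim 2^{n}\asymp v/\log v$. Given a cut set $C$ whose removal leaves components of size $\le|Z_n|/2$, one may assume $|C|<\tfrac14|Z_n|$ (otherwise we are already done), and greedily group the components into two parts $A,B$ of comparable size with $\tfrac14|Z_n|\le|A|\le\tfrac34|Z_n|$. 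Since $A$ and $B$ meet distinct components, every $Z_n$-neighbour of $A$ outside $A$ lies in $C$, whence $|C|\ge|\partial_{Z_n}A|$. It therefore suffices to prove the balanced vertex-isoperimetric inequality $|\partial_{Z_n}A|\gtrsim|A|/n$ for every such $A$.

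The \textbf{main obstacle} is precisely this isoperimetric inequality, which is where the logarithm is created: the Følner set $Z_n$ itself has boundary only at its two cursor-ends and so already achieves ratio $\asymp 1/n$, and the content is that no balanced subset does essentially better. I expect to establish it elementarily by induction on the window width, splitting the cursor range $[0,n]$ in half and using that flipping the lamp under the cursor is a free involution to control the boundary contributed by each half; alternatively one may invoke the classical fact that the Følner function of $\Z_2\wr\Z$ is $\asymp 2^{n}$, which gives exactly $|\partial A|\gtrsim|A|/\log|A|$ for balanced $A$. Combining the two bounds yields $\Sep_{\Z_2\wr\Z}(v)\asymp v/\log v$.
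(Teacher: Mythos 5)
Your upper bound is correct and takes a mildly different route from the paper's: the paper reduces to connected $F$, locates a median cursor position, and removes the two nearest layers of size at most $4v/\log v$ on either side of it, ruling out a large middle component by the same counting $v' \le r' 2^{r'}$ that you package as the ``a width-$W$ cursor window confines components to size $W2^W$'' observation; your block decomposition (one sparsest layer per block of length $m\asymp\log v$) is arguably cleaner and needs no connectivity reduction. One repair is needed: the definition of $\Cut F$ demands components of size at most $|F|/2$, not $v/2$, and when $|F|$ is much smaller than $v$ your choice $m=\lfloor\frac14\log_2 v\rfloor$ gives strips whose components can exceed $|F|/2$. Choosing $m=\lfloor\frac14\log_2 |F|\rfloor$ instead yields $\Cut F\preceq |F|/\log|F|$ for every finite $F$, and monotonicity of $t\mapsto t/\log t$ then gives $\Sep(v)\preceq v/\log v$.

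The lower bound, however, has a genuine gap, and it sits exactly where you flag it. Your reduction from $\Cut Z_n$ to the balanced relative isoperimetric inequality $|\partial_{Z_n}A|\gtrsim |A|/n$ is correct (the greedy grouping and $\partial_{Z_n}A\subseteq C$ are fine), but that inequality is the entire content of the lower bound, and neither of your proposed attacks establishes it. The F\o lner-function route fails quantitatively: the F\o lner function controls the boundary of $A$ in the \emph{group}, giving $|\partial_G A|\ge c\,2^n$ for balanced $A\subseteq Z_n$, whereas you need the boundary inside the induced graph $Z_n$; the two differ by the external boundary of $A$, which can be as large as the external boundary of the whole box, about $3\cdot 2^n$ (cursor steps out at the two ends, plus lamp flips at the right end that leave the support window) --- the same order as the target. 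Thus $|\partial_{Z_n}A|\ge c\,2^n - 3\cdot 2^n$ proves nothing unless the isoperimetric constant $c$ exceeds $3$, which no general citation provides; indeed $Z_n$ is itself near-F\o lner-optimal, so there is no room at the level of constants. The induction-on-width sketch is too undeveloped to assess. The paper sidesteps isoperimetry entirely with a canonical-paths congestion argument on the box $T_n$: each pair $(x,y)=((f,i),(g,j))$ is joined by the path sweeping the cursor to the left end, across the box converting $f$ into $g$, then to position $j$; each path has at most $8n$ vertices, each vertex $z=(h,k)$ lies on at most $3|T_n|^2/2^{2n+1}$ paths (membership forces $h$ to agree with $f$ on a suffix and with $g$ on a prefix, or $h=f$, or $h=g$), and any separating set is crossed by at least half of the $|T_n|^2$ paths, yielding $|W|\ge \frac16\,2^{2n+1}\asymp |T_n|/\log|T_n|$. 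Until you prove your isoperimetric inequality or replace it by such a congestion argument, the lower bound remains unproven.
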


\begin{remk}
	More generally, the lamp group~$\Z_2$ can be replaced by any finite group; the same proof applies.
\end{remk}

%
%
%

\begin{proof}
\textbf{Upper bound}
Let us start by proving the upper bound.
Let $F \subset G$ be a finite connected subgraph of $G$ containing $v$ vertices.
Let
\[\pi_\Z : \Z_2 \wr \Z \to \Z\]
be the natural projection map.

Since $F$ is connected, $\pi_\Z(F)$ is connected, thus is an interval $I$.
Moreover, the fact that $F$ is connected implies that we have $f_{|\Z \setminus I} = g_{|\Z \setminus I}$, for every $(f,i),(g,j) \in F$.
This implies:
\[v \le r 2^r,\quad\text{with $r = \# \pi_\Z(F)$.}\]

Let $i_G \in \Z$ be such that
\[\#\{x\in F \mid \pi_\Z(x) < i_G \} \le v/2,\]
and
\[\#\{x\in F \mid \pi_\Z(x) > i_G \} \le v/2.\]

Let $i_G'$ be the biggest integer satisfying $i_G' \le i_G$ and $\# \pi_\Z^{-1}(\{i_G'\}) \cap F \le \frac{4v}{\log v}$.
Similarly, let $i_G''$ be the smallest integer satisfying $i_G'' \ge i_G$ and $\# \pi_\Z^{-1}(\{i_G''\}) \cap F \le \frac{4v}{\log v}$.
Then, there are at most $\frac{8v}{\log v}$ vertices in $C \ce \pi_\Z^{-1}(\{i_G',i_G''\}) \cap F$.
Let us prove that $C$ separates $F$.

By removing $C$, the graph $F$ is disconnected into three parts\footnote{some might be empty or not connected}:

\[F_1 \vcentcolon = \{x\in F \mid \pi_\Z(x) < i'_G \},\]

\[F_2 \vcentcolon = \{x\in F \mid i'_G < \pi_\Z(x) < i''_G \},\]

\[F_3 \vcentcolon = \{x\in F \mid \pi_\Z(x) > i''_G \}.\]

By construction, both $F_1$ and $F_3$ have size at most $v/2$.

\begin{claim}
The connected components of $F_2$ contain at most $v/2$ vertices.
\end{claim}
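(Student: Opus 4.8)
The plan is to show that each connected component of $F_2$ lives over a very short range of lamplighter positions, and then to bound its size by the number of lamp configurations supported on that range. Write $J \ce \{k \in \Z \mid i_G' < k < i_G''\}$ for the set of positions strictly between the two cut columns, and set $m \ce \#J = i_G'' - i_G' - 1$, so that $F_2 = \{x \in F \mid \pi_\Z(x) \in J\}$.

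First I would bound $m$. By maximality of $i_G'$ and minimality of $i_G''$, every integer $k \in J$ violates the defining inequality, i.e. $\#\big(\pi_\Z^{-1}(\{k\}) \cap F\big) > \frac{4v}{\log v}$. Summing over $k \in J$ and using $\#F = v$ gives
\[ v \;\ge\; \sum_{k \in J} \#\big(\pi_\Z^{-1}(\{k\}) \cap F\big) \;>\; m \cdot \frac{4v}{\log v}, \]
so that $m < \tfrac14 \log v$.

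The key step, and the only genuinely delicate point, is that a connected component $K$ of $F_2$ cannot change the lamps outside $J$. Indeed, any path contained in $F_2$ visits only positions in $J$; along such a path the walk generator never leaves $J$ and the switch generator only toggles the lamp at the current position, which lies in $J$. Hence all vertices of $K$ carry the same lamp values on $\Z \setminus J$. A vertex of $K$ is therefore determined by its lamplighter position (at most $m$ choices) together with its lamp configuration on $J$ (at most $2^m$ choices), which yields $\#K \le m\, 2^m$.

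Finally I would combine the two estimates. Since $m < \tfrac14 \log v$ (reading $\log$ as $\log_2$, as in the relation $v \le r2^r$ above), we get
\[ \#K \;\le\; m\, 2^m \;<\; \tfrac14 (\log v)\, v^{1/4} \;\le\; \tfrac v2 \]
for $v$ large enough, which is all that the asymptotic upper bound requires. Once the confinement argument giving $\#K \le m\,2^m$ is in place, this is just the elementary comparison $v^{1/4}\log v \ll v$.
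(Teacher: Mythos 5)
Your proof is correct and is essentially the paper's own argument run forwards instead of by contradiction: both rest on the same two ingredients, namely the pigeonhole bound $m < \tfrac14 \log v$ (every column strictly between $i_G'$ and $i_G''$ contains more than $\tfrac{4v}{\log v}$ points of $F$, and $\#F_2 \le v$) and the connectivity/lamp-confinement count $\#K \le m\,2^m$, which is exactly the paper's ``since $F'$ is connected, $v' \le r' 2^{r'}$ as before'' — a step you in fact justify more explicitly than the paper does. The only difference is cosmetic: your direct comparison $m\,2^m < \tfrac14 (\log v)\, v^{1/4} \le v/2$ holds only for $v$ large enough, which, as you note, is harmless since the conclusion $\Sep_{\Z_2 \wr \Z}(v) \preceq \tfrac{v}{\log v}$ is asymptotic.
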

\begin{proof}
Let us assume as a contradiction that $F_2$ has a connected component $F'$ containing more than $v/2$ vertices.
Let $v' = \# F'$ and $r' = \# \pi_\Z(F')$.
Then, we have
\begin{align*}
v' &> v/2
\\&\ge \#F_2/2
\\&> \frac12 |i''_G - i'_G - 1| \frac{4v}{\log v}\quad\text{by definition of $i'_G$ and $i''_G$}
\\&\ge \frac12 r' \frac{4v}{\log v}
\\&\ge \frac12 r' \frac{4v'}{\log v'}
\\&= r' \frac{2v'}{\log v'}.
\end{align*}
Moreover, since $F'$ is connected, we have as before:
\[v' \le r' 2^{r'},\]
which gives
\[\log v' \le 2r'.\]
Combining, the above inequalities, we obtain:
\begin{align*}
	v' 	&> r' \frac{2v'}{\log v'}
		\\&\ge v',
\end{align*}
which is is a contradiction.
\end{proof}
Since $C$ separates $F$, we obtain the desired upper bound on the separation profile:
\[\Sep_{\Z_2 \wr \Z}(v) \preceq \frac v {\log v}.\]

\textbf{Lower bound}
Let us prove now the lower bound.
For each positive integer $n$, let us consider the subgraph $T_n$ induced by the set of vertices given by a lamplighter position in the interval $[-n,n]$ and the lamps of this interval being arbitrarily on or off.
In other word, $T_n$ is the set $\Z_2^{[-n,n]} \times [-n,n]$. It has $(2n+1)2^{2n+1}$ vertices.
Let us compute the separation of this graph.

In the spirit of the proof of~\cite[Theorem 3.5]{BST}, let~$W$ be a separating set for $T_n$ and let $x = (f,i)$ and $y = (g,j)$ be two elements of~$T_n$.
We consider the following path from $x$ to $y$:
\begin{enumerate}
	\item\label{enum: C1} from~$(f,i)$ to~$(f,-n)$:~$i+n$ multiplications by~$w^{-1}$.
	\item\label{enum: C2} from~$(f,-n)$ to~$(g,n)$:~$2n$ multiplications by~$w$, plus one multiplication by~$s$ each time we have~$f_k\ne g_k$.
	\item\label{enum: C3} from~$(g,n)$ to~$(g,j)$:~$n-j$ multiplications by $w$.
\end{enumerate}

Let us call this path~$P(x,y)$. Clearly,~$P$ contains at most~$8n$ elements.
If $x$ and $y$ are picked independently and uniformly at random in $T_n$, the path $P(x,y)$ passes through the separating set~$W$ with probability at least~$1/2$.

This means that, among the $|T_n|^2$ path we have defined, at least $\frac12 |T_n|^2$ of them intersect $W$.
Following the approach of \cite[Proposition 1]{GladkovaShum}, we will use the following claim:

\begin{claim}
	Each vertex of $T_n$ lies in at most $\frac{3|T_n|^2}{2^{2n+1}}$ paths of the family $P(x,y)$.
\end{claim}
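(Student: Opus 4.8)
The plan is to fix a single vertex $z = (h,k) \in T_n$ and to count the number of ordered pairs $(x,y) = ((f,i),(g,j)) \in T_n \times T_n$ whose path $P(x,y)$ passes through $z$. I would organize this count according to which of the three segments \ref{enum: C1}, \ref{enum: C2}, \ref{enum: C3} of $P(x,y)$ contains $z$, bound each contribution separately, and then sum.

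For the two straight segments the bookkeeping is easy. Along segment \ref{enum: C1} the lamp configuration is constantly $f$ and the position decreases from $i$ to $-n$, so $z$ lies on it precisely when $f = h$ and $i \ge k$. This forces $f = h$, while $g$ ranges over all $2^{2n+1}$ configurations, $i$ over $\{k,\dots,n\}$ and $j$ over $[-n,n]$, giving at most $2^{2n+1}(n-k+1)(2n+1)$ pairs. Segment \ref{enum: C3} is symmetric under interchanging $f$ with $g$ and $i$ with $j$ and replacing $i \ge k$ by $j \le k$, yielding at most $2^{2n+1}(n+k+1)(2n+1)$ pairs. The key observation I would stress is that adding these two bounds makes the dependence on $k$ cancel, leaving the $k$-independent quantity $2^{2n+1}(2n+1)(2n+2)$.

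The genuine obstacle is segment \ref{enum: C2}, where the configuration sweeps from $f$ to $g$ as the position runs from $-n$ to $n$. When the walk is at position $k$, the lamps strictly to the left of $k$ have already been set to $g$ and those strictly to the right are still $f$, while the lamp at $k$ itself reads $f_k$ before its (possible) flip and $g_k$ after it. Hence $z$ lies on this segment exactly when $h_m = g_m$ for $m < k$, $h_m = f_m$ for $m > k$, and $h_k \in \{f_k, g_k\}$. These conditions determine $g$ on $\{-n,\dots,k-1\}$ and $f$ on $\{k+1,\dots,n\}$, leave $f$ on the left block ($n+k$ bits) and $g$ on the right block ($n-k$ bits) free, and allow the pair $(f_k,g_k)$ exactly $3$ of its $4$ values (all but $f_k = g_k = 1 - h_k$). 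This produces $3 \cdot 2^{n+k} \cdot 2^{n-k} \cdot (2n+1)^2 = 3 \cdot 2^{2n}(2n+1)^2$ pairs. The care required here is precisely not to count the pre-flip and post-flip vertices at position $k$ as independent, which would inflate the constant; treating them as the single constraint $h_k \in \{f_k,g_k\}$ is what keeps it correct.

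Finally I would combine the three contributions. With $|T_n| = (2n+1)2^{2n+1}$ the target bound reads $3|T_n|^2/2^{2n+1} = 2^{2n}(2n+1)(12n+6)$, whereas the sum of the three counts is $2^{2n}(2n+1)\big(2(2n+2) + 3(2n+1)\big) = 2^{2n}(2n+1)(10n+7)$. Since $10n+7 \le 12n+6$ for every $n \ge 1$, this sum is at most $3|T_n|^2/2^{2n+1}$, establishing the claim. I would add the remark that, since only the order of magnitude of this count matters for the ensuing separation lower bound, the precise constant is inessential, but the computation above shows the stated value $3$ already works.
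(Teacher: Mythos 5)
Your argument is correct and follows essentially the same route as the paper's: fix the vertex $z=(h,k)$, split according to which of the three segments $C_1$, $C_2$, $C_3$ of $P(x,y)$ contains $z$, and count the pairs $(x,y)$ compatible with each case. Your bookkeeping is in fact finer than the paper's, which asserts at most $|T_n|^2/2^{2n+1}$ pairs in \emph{each} of the three cases: as your count of $3\cdot 2^{2n}(2n+1)^2=\tfrac32\,|T_n|^2/2^{2n+1}$ for the middle segment shows (the pre-flip/post-flip subtlety at position $k$ allows $3$ of the $4$ values of $(f_k,g_k)$), the $C_2$ case actually exceeds that per-case bound, and it is your use of the position constraints $i\ge k$ and $j\le k$ in the $C_1$ and $C_3$ counts that absorbs the excess and still yields the stated total $3|T_n|^2/2^{2n+1}$.
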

\begin{proof}
	Let $z=(h,k) \in T_n$.
	Let $x=(f,i)$ and $y=(g,j)$ be such that $z$ lies in the path $P(x,y)$ joining $x$ and $y$.
	Let $C_1$, $C_2$ and $C_3$ be the set of vertices encountered when following the subpaths \ref{enum: C1}, \ref{enum: C2}, and \ref{enum: C3} respectively.
	\begin{itemize}
		\item If $z$ lies in $C_1$, this implies that we have $f=h$.
		\item If $z$ lies in $C_2$, this implies that there is some $l\in [-n,n]$ such that we have $g_{|[-n,l]} = h_{|[-n,l]}$ and $f_{|[l+1,n]} = h_{|[l+1,n]}$.
		\item If $z$ lies in $C_3$, this implies that we have $g=h$.
	\end{itemize}		
		In each of the three situations above, there are at most $\frac{|T_n|^2}{2^{2n+1}}$ pairs $(x,y) \in T_m \times T_n$ satisfying the condition.
		This implies that $z$ cannot lie in more that $\frac{3|T_n|^2}{2^{2n+1}}$ paths of the family $P(x,y)$.
\end{proof}

Then, $W$ has to contain at least $\frac{|T_n|^2}{2 \cdot 3|T_n|^2/2^{2n+1}}$ vertices.
Note that we have 
\begin{align*}
	\frac{|T_n|^2}{2 \cdot 3|T_n|^2/2^{2n+1}} &= \frac16 \cdot 2^{2n+1}
	\\&= 	\frac16\frac{T_n}{2n+1}
	\\&\ge 	\frac16\frac{T_n}{\log |T_n|}
\end{align*}
This shows that we have~$\Cut T_n \ge \frac16\frac{|T_n|}{\log |T_n|}$, which implies the desired lower bound on the separation profile:
\[\Sep_{\Z_2 \wr \Z}(v) \succeq \frac v {\log v}.\qedhere\]
\end{proof}

\bibliography{./bibliothaeque}{}
\bibliographystyle{abbrv}
\end{document}